\newtheorem{theorem}{Theorem}[section]
\newtheorem{lemme}[theorem]{Lemma}
\theoremstyle{remark}
\numberwithin{equation}{section}
\title{Biparental model with infinite selection}
\author{Camille Coron, Yves Le Jan}
\date{}
\title{Genetic contribution of an advantaged mutant in the biparental Moran model}
\author{Camille Coron, Yves Le Jan}
\date{}
\begin{document}
\maketitle

\section{Model}

We consider a population of haploid individuals reproducing sexually, i.e. for which the genome of each individual is a random mixture of the genome of its two parents. We assume that initially one individual carries a mutation at one locus, and that individuals carrying this mutation have an advantage regarding genome transmission. Our aim is to study the long time effect of this mutation on the genetic composition of the population, when population size is large.

Biparental genealogies have received some interest, notably in \cite{Chang1999,Derrida2000,GravelSteel2015}, in which time to recent common ancestors and ancestors' weights are investigated for the Wright-Fisher biparental model. In \cite{geneal}, we studied the asymptotic law of the  contribution of an ancestor, to the genome of the present time population. The articles \cite{MatsenEvans2008} and \cite{BartonEtheridge2011} study the link between pedigree, individual reproductive success and genetic contribution. 

As in the previous paper \cite{geneal}, this population is assumed to be composed of a fixed number $N$ of individuals, and is modeled using a Moran biparental model. However this model is modified in order to take into account the advantage conferred by the considered mutation. The monoparental Moran model with selection at birth has received some interest, notably in \cite{EtheridgeGriffiths} that studies its dual coalescent and \cite{KluthBaake} that notably studies alleles fixation probabilities and ancestral lines. Here we consider a biparental model, which means that each individual has two parents. Selection can therefore influence either the reproduction or the death of individuals, but here we assume that selection occurs at death and has a strong effect. More precisely, at each discrete time step, two individuals are chosen independently and uniformly to be parents, and one individual, chosen uniformly but only among non advantaged individuals (i.e. individuals that do not carry this mutation), is then replaced by the offspring of the two parents. This is a way to define a strong selection mechanism at death, in the sense that the ratio of the death rates of disadvantaged and advantaged individuals is infinite. Note in particular that in this model, advantaged individuals cannot die. Our aim here is to study the impact of the invasion of this strong positive mutation on the population genetic composition, so we do not consider the process once all individuals carry the same advantageous allele.

Genetic transmission is assumed to follow Mendel rules, which means that for a given locus, one of the two parents is chosen uniformly (among the two) and transmits its allele (i.e. a copy of its genome at this locus) to the offspring. This transmission is not independent for loci that are close on the genome, but can be considered as independent for loci that are on different chromosomes for instance. By convention, we call "mother" the parent that transmits its allele at the locus under mutation, and "father" the other parent. To study the genetic composition of the population, we now consider a new locus, distant enough from the one presenting the mutation, so that the genome is assumed to be transmitted independently at these two loci. Our interest is then to study the probability for a given allele at this new locus, to come from the individual that was originally advantaged.

Individuals are assumed to live in $N$ sites labeled by $\{1,...,N\}$ and we can set that the initial mutant lives in site $1$. Let $\mu_n$, $\pi_n$ and $\kappa_n$ be respectively the sites of the mother, father and replaced individual, at time $n\in\mathbb{Z}_+$. Let also $\mathcal{Y}_n\subset \{1,...,N\}$ be the set of advantaged individuals at time $n$. The population dynamics is characterized by the stochastic process $(\mu_n,\pi_n,\kappa_n,\mathcal{Y}_n)_{n\in\mathbb{Z}_+}$, whose natural filtration is denoted by $(\mathcal{F}_n)_{n\in\mathbb{Z}_+}$.

For any $i,j\in\{1,...,N\}$, we denote by $A_n(i,j)$ the probability that the allele at the new locus, carried by the individual at site $i$ living at time $n\in\mathbb{Z}_+$ comes from the ancestor living at site $j$ at time $0$, conditionnaly to the filtration $\mathcal{F}_n$. The sequence of matrices $(A_n)_{n\in\mathbb{Z}_+}$ is defined recursively as follows :

$$A_0 \text{ is the identity matrix.}$$
and for any $n\in\mathbb{Z}_+$,
$$\left\{\begin{array}{l}
 A_{n+1}(\kappa_n,j)=\frac{1}{2}(A_n(\mu_n,j)+A_n(\pi_n,j)) \\ 
A_{n+1}(i,j)=A_n(i,j) \text{ for $i\neq \kappa_n$.}
\end{array}\right.$$

The quantity
$$M_n(j)=\sum_{i=1}^N A_n(i,j)$$
 is called the genetic weight of ancestor $j$ after $n$ time steps: it is equal to $N$ times the probability that an allele sampled at time $n$ (far enough on the genome from the locus under selection) comes from the individual living at site $j$ at time $0$.

The number $(|\mathcal{Y}_n|)_{n\in\mathbb{Z}_+}=(Y_n)_{n\in\mathbb{Z}_+}$ of advantaged individuals is a non decreasing Markov chain, that jumps from $k$ to $k+1$ with probability $k/N$ at each time step, or stays in $k$. Let us define $$S_k=\inf\{n:Y_n=k\}$$ It is important to note that knowing that $S_k>n-1$, then $\{S_k=n\}$ if and only if the mother $\mu_{n-1}$ belongs to $\mathcal{Y}_{n-1}$, which occurs with conditional probability $Y_{n-1}/N$. 

The following theorem gives the order of magnitude of the weight $M_{S_N}(1)$ of the unique initially advantaged individual, when population size goes to infinity, once all individuals are advantaged.

\begin{theorem} \label{thm}The weight $M_{S_N}(1))$ of the initially advantaged individual $1$ satisfies $$\mathbb{E}(M_{S_N}(1)))\underset{N\rightarrow+\infty}{\sim} \frac{4}{\sqrt{\pi}} \sqrt{N}.$$
\end{theorem}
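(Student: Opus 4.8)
The plan is to reduce the statement to a linear recursion for two families of expectations and then extract the constant from a Wallis-type product. The starting observation is that advantaged individuals never die, so for every $i\in\mathcal Y_n$ one has $A_{n+1}(i,1)=A_n(i,1)$: the weight carried by an advantaged site is frozen once it is created. Consequently, writing $W_n=\sum_{i\in\mathcal Y_n}A_n(i,1)$ for the weight of ancestor $1$ borne by advantaged individuals, $W_n$ increases only at the jump times $S_{k+1}$, and at the final time $W_{S_N}=M_{S_N}(1)$ since every site is then advantaged. I will therefore compute $\mathbb E(W_{S_N})$. Conditioning on $\mathcal F_n$, using that $\mu_n$ and $\pi_n$ are uniform on all $N$ sites while $\kappa_n$ is uniform on the $N-Y_n$ disadvantaged sites, yields the two exact one-step drifts
\begin{align}
\mathbb E(M_{n+1}(1)-M_n(1)\mid\mathcal F_n)&=\frac{1}{N-Y_n}\Big(W_n-\frac{Y_n}{N}M_n(1)\Big),\\
\mathbb E(W_{n+1}-W_n\mid\mathcal F_n)&=\frac12\Big(\frac{W_n}{N}+\frac{Y_n}{N^2}M_n(1)\Big).
\end{align}

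Next I pass to quantities indexed by the jump chain. Set $\beta_k=\mathbb E(W_{S_k})$ and $\phi_k=\mathbb E(M_{S_k}(1))$, with $\beta_1=\phi_1=1$. Since $W$ jumps only at $S_{k+1}$, the second drift gives the exact relation $\beta_{k+1}=\beta_k(1+\frac1{2k})+\frac1{2N}\mathbb E(M_{S_{k+1}-1}(1))$. For $M$ one sums the first drift over the period $\{S_k,\dots,S_{k+1}-1\}$, on which $Y_n\equiv k$ and $W_n\equiv W_{S_k}$; using that the holding time $T_k=S_{k+1}-S_k$ is geometric with parameter $k/N$ and that $M_n(1)$ varies slowly, this produces $\phi_{k+1}\approx\phi_k(1-\frac1{N-k})+\frac{N}{k(N-k)}\beta_k$. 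The decisive step is then to introduce $\delta_k=N\beta_k-k\phi_k$: combining the two recursions, the awkward terms cancel and $\delta_k$ obeys the scalar recursion $\delta_{k+1}=\delta_k\big(1+\frac1{2k}-\frac{k+1}{k(N-k)}\big)$ with $\delta_1=N-1$.

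Solving this is elementary. Splitting $\frac{k+1}{k(N-k)}=\frac1{N-k}+\frac1{k(N-k)}$ and discarding the last, summable term, the product factors as $\prod_{i=1}^{k-1}(1+\frac1{2i})\cdot\prod_{i=1}^{k-1}(1-\frac1{N-i})$; the second product telescopes to $\frac{N-k}{N-1}$, while the Wallis/Stirling estimate $\prod_{i=1}^{k-1}(1+\frac1{2i})\sim\frac{2}{\sqrt\pi}\sqrt k$ supplies the constant, so $\delta_k\sim\frac{2}{\sqrt\pi}\sqrt k\,(N-k)$. Feeding this back through $\phi_{k+1}-\phi_k=\frac{\delta_k}{k(N-k)}$ gives $\phi_N=1+\sum_{j=1}^{N-1}\frac{\delta_j}{j(N-j)}\sim\frac{2}{\sqrt\pi}\sum_{j=1}^{N-1}\frac1{\sqrt j}\sim\frac{4}{\sqrt\pi}\sqrt N$, and since $\beta_N=\phi_N$ this is exactly $\mathbb E(M_{S_N}(1))$, as claimed. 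The factor $\frac{2}{\sqrt\pi}$ rather than the naive $1$ is precisely the discreteness of the recursion $(1+\frac1{2k})$, which is why a continuous ODE approximation would give the wrong constant $2$ instead of $\frac{4}{\sqrt\pi}$.

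The main obstacle is the within-period summation for $M$, namely justifying the passage from the exact one-step drift to the approximate recursion for $\phi_k$: this requires controlling $\mathbb E(T_k W_{S_k})$ and $\mathbb E(\sum_{n}M_n(1))$ over the period, hence the correlation between the geometric holding time $T_k$ (driven by the events $\{\mu_n\in\mathcal Y_n\}$) and the weight variables. I expect to handle this by concentration of $T_k$ around $N/k$ together with a slow-variation bound showing the per-step increment of $M_n(1)$ is $O(M_n(1)/N)$, so that $\sum_{n}M_n(1)\approx T_k M_{S_k}(1)$ up to an error that is summable against $\frac1{N-k}$. Care is also needed uniformly near $k=N$, where the coefficients $\frac1{N-k}$ blow up; there one checks that $\delta_k$ carries the compensating factor $(N-k)$, keeping all error terms of order $o(\sqrt N)$.
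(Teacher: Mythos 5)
Your proposal follows essentially the same route as the paper, up to a linear change of coordinates: your $W_n$ is the paper's $B_n=\sum_{i\in\mathcal Y_n}A_n(i,1)$, your $M_n(1)$ is $B_n+C_n$, your $(\beta_k,\phi_k)$ are $(u_k,u_k+v_k)$, and your decoupling combination $\delta_k=N\beta_k-k\phi_k=(N-k)u_k-kv_k$ is exactly $k(N-k)$ times the paper's $x_k=\tilde u_k-\tilde v_k$, whose recursion \eqref{rec-x} agrees with yours to the relevant order. The one-step drifts you compute are correct, the Wallis/Stirling product gives the right constant, and your final summation $\phi_N=1+\sum_j\delta_j/(j(N-j))$ is the same computation as the paper's $\tilde u_N=x_N+\frac{1}{2N+1}\sum_l x_l$.

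The genuine gap is the step you flag yourself: the ``$\approx$'' in $\phi_{k+1}\approx\phi_k\left(1-\frac{1}{N-k}\right)+\frac{N}{k(N-k)}\beta_k$, where you sum the drift of $M$ over the random holding interval and replace $M_n$ by $M_{S_k}$ (and similarly $\mathbb E(M_{S_{k+1}-1}(1))$ is not $\phi_k$ in the $\beta$-recursion, since $M$ moves during the holding period). Your proposed fix via concentration of $T_k$ and slow variation can be made to work --- the accumulated error is of order $\sum_k\frac{1}{\sqrt k\,(N-k)}=o(1)$ --- but it requires a careful optional-stopping and second-moment argument that is not written, and it is the step carrying all the analytic risk. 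The paper shows this issue can be bypassed entirely: conditioning each step on whether $Y$ jumps gives \emph{exact} affine one-step recursions for the pair $(\mathbb E(B_n),\mathbb E(C_n))$ during the holding period (a matrix $H_k$ for a non-jump step, $L_k$ for the jump step), and summing the geometric series $\mathcal A_k=\sum_{l\ge 0}L_kH_k^l=L_k[I-H_k]^{-1}$ yields an exact linear recursion from $(u_k,v_k)$ to $(u_{k+1},v_{k+1})$, with no approximation and no need to control within-period correlations or $\mathbb E\left(\sum_n M_n\right)$. Replacing your approximate passage to the jump chain by this exact summation closes the gap; everything else in your argument then goes through as written.
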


Note that in the absence of selection, at each time, the expectation of the weight $M_{\infty}(j)$, for any $j$, is equal to $1$, by exchangeability between individuals. In the absence of selection, the law of the asymptotic weight of a set of ancestors was studied in \cite{geneal}. Note also that in the present case with infinite selection, the weight of each individual which is not the initially advantaged one, is also of order $1$.

\begin{proof}
Let us define respectively the weight of the initially advantaged individual, among advantaged and non advantaged individuals: $$B_n=\sum_{i\in\mathcal{Y}_n} A_n(i,1)$$ and
$$C_n=\sum_{i\notin\mathcal{Y}_n} A_n(i,1).$$
The model early defined leads to the following set of equations:
\begin{equation}\label{eq:B-i}\mathbb{E}(B_{n+1}|Y_{n+1}=Y_n,\mathcal{F}_n)=B_n.\end{equation}
\begin{equation}\label{eq:B-i+1}\mathbb{E}(B_{n+1}|Y_{n+1}=Y_{n}+1,\mathcal{F}_n)=B_n+\frac{B_n}{2Y_n}+\frac{1}{2N}(B_n+C_n).\end{equation}
\begin{equation}\label{eq:C-i}\mathbb{E}(C_{n+1}|Y_{n+1}=Y_n)=C_n-\frac{C_n}{N-Y_n}+\frac{C_n}{2(N-Y_n)}+\frac{1}{2}\frac{B_n+C_n}{N}\end{equation}
\begin{equation}\label{eq:C-i+1}\mathbb{E}(C_{n+1}|Y_{n+1}=Y_n+1)=C_n-\frac{C_n}{N-Y_n}\end{equation}
Equation \eqref{eq:B-i} is explained as follows :
\begin{align*}\mathbb{E}(B_{n+1}|Y_{n+1}=Y_n,\mathcal{F}_n)&=\mathbb{E}\left(\sum_{i\in\mathcal{Y}_{n+1}} A_{n+1}(i,1)|Y_{n+1}=Y_n,\mathcal{F}_n\right)\\&=\mathbb{E}\left(\sum_{i\in\mathcal{Y}_{n+1}} A_{n}(i,1)|Y_{n+1}=Y_n,\mathcal{F}_n\right)\end{align*} since $i\neq \kappa_n$ if $i\in\mathcal{Y}_{n+1}$ and $Y_{n+1}=Y_n$.

In Equation \eqref{eq:B-i+1}, the first term comes from advantaged individuals at time $n$, the second term is the weight brought by the mother (which is necessarily advantaged since $Y_{n+1}=Y_n+1$) and the last term is the weight brought by the father. 

In Equation \eqref{eq:C-i}, the first term comes from disadvantaged individuals at time $n$, the second term comes from the weight loss due to the death of a disadvantaged individual, the third term comes from the weight of the mother (which is necessarily disadvantaged since $Y_{n+1}=Y_n$), and the fourth term comes from the weight of the father.

In Equation \eqref{eq:C-i+1}, the first term comes from disadvantaged individuals at time $n$, and the second term comes from the weight loss due to the death of a disadvantaged individual.

Therefore if we set
$$u_k=\mathbb{E}(B_{S_k}) \text { and } v_k=\mathbb{E}(C_{S_k})$$ then

$$ \left( \begin{tabular}{c}
    $u_{k+1}$ \\
    $v_{k+1}$ 
\end{tabular}\right)=\mathcal{A}_k \left( \begin{tabular}{c}
    $u_{k}$ \\
    $v_{k}$ 
\end{tabular}\right)$$
where 
$$\mathcal{A}_k=\sum_{l=0}^{+\infty}L_k (H_k)^l=L_k[I-H_k]^{-1}$$
and
$$H_k=\left(1-\frac{k}{N}\right)\left(\begin{tabular}{cc}
  $1$   & $0$ \\
  $\frac{1}{2N}$   &  $1-\frac{1}{2(N-k)}+\frac{1}{2N}$
\end{tabular}\right)$$
$$L_k=\frac{k}{N}\left(\begin{tabular}{cc}
  $1+\frac{1}{2k}+\frac{1}{2N}$   & $\frac{1}{2N}$\\
  $0$    &  $1-\frac{1}{N-k}$
\end{tabular}\right).$$

For any triangular matrix $$\left(\begin{tabular}{cc}
  $1-a$   & $0$ \\
  $b$   &  $1-c$
\end{tabular}\right)^{-1}=\left(\begin{tabular}{cc}
  $\frac{1}{1-c}$   & $0$ \\
  $-\frac{b}{(1-a)(1-c)}$   &  $\frac{1}{1-a}$
\end{tabular}\right).$$ 
Hence, after a few simplifications,
 $$[I-H_k]^{-1}= \frac{N}{k} \left( \begin{tabular}{cc}
  $1$   & $0$ \\
  $\frac{N-k}{(2N+1)k}$   &  $\frac{2N}{2N+1}$
\end{tabular}\right).$$
Therefore
$$\mathcal{A}_k=\left(\begin{tabular}{cc}
  $1+\frac{1}{2k}+\frac{1}{2N}+\frac{N-k}{N}\frac{1}{2k(2N+1)}$   & $\frac{1}{2N+1}$ \\
  $\frac{N-k-1}{(2N+1)k}$   &  $\left(1-\frac{1}{N-k}\right)\left(1-\frac{1}{2N+1}\right)$
\end{tabular}\right).$$

Let us set $\tilde{u}_k=\frac{u_k}{k}$ and $\tilde{v}_k=\frac{v_k}{N-k}$ for $k\geq 1$. Then\\

\begin{equation}\label{rec-uv} \left( \begin{tabular}{c}
    $\tilde{u}_{k+1}$ \\
    $\tilde{v}_{k+1}$ 
\end{tabular}\right)=\tilde{\mathcal{A}}_k \left( \begin{tabular}{c}
    $\tilde{u}_{k}$ \\
    $\tilde{v}_{k}$ 
\end{tabular}\right)\end{equation}

with \begin{equation}\label{eq:A}\tilde{\mathcal{A}}_k=\left(\begin{tabular}{cc}
  $1-\frac{1}{2N+1}\left(1-\frac{1}{2k+1}+\frac{1}{2N}-\frac{1}{2N(k+1)}\right)$   & $\frac{N-k}{(2N+1)(k+1)}$ \\
  $\frac{1}{(2N+1)k}$   &  $1-\frac{1}{2N+1}$
\end{tabular}\right).\end{equation}

 Remarkably enough, note that

\begin{equation}\label{rec-x}\tilde{u}_{k+1}-\tilde{v}_{k+1}=\frac{2Nk+N+k}{(2N+1)(k+1)}(\tilde{u}_{k}-\tilde{v}_{k}).\end{equation} Theorem \ref{thm} is then given by the following lemma.  \end{proof}

\medskip
\begin{lemme}\label{lem-u}
$$u_N\underset{N\rightarrow \infty}{\sim}\frac{4}{\sqrt{\pi}}\sqrt{N}$$
\end{lemme}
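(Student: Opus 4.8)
The plan is to turn the two-dimensional recursion \eqref{rec-uv} into two scalar problems by exploiting \eqref{rec-x}, obtain a closed form for $u_N$ in terms of Gamma functions, and then let $N\to\infty$. Write $x_k=\tilde u_k-\tilde v_k$ and set $\alpha=\frac{N}{2N+1}$. Since $\tilde u_1=u_1=\mathbb{E}(B_{S_1})=A_0(1,1)=1$ and $\tilde v_1=v_1/(N-1)=0$, we have $x_1=1$, and because $2Nk+N+k=(2N+1)k+N$ the relation \eqref{rec-x} simplifies to $x_{k+1}=\frac{k+\alpha}{k+1}\,x_k$. Iterating gives the first key formula, a ratio of Gamma functions:
\[x_k=\prod_{j=1}^{k-1}\frac{j+\alpha}{j+1}=\frac{\Gamma(k+\alpha)}{\Gamma(1+\alpha)\,\Gamma(k+1)}.\]

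Next I would extract $\tilde v_k$ itself. Reading the second row of the matrix $\mathcal A_k$ acting on $(u_k,v_k)$ together with the scalings $u_k=k\tilde u_k$ and $v_k=(N-k)\tilde v_k$ (the factors $k$ and $N-k-1$ cancelling) gives $\tilde v_{k+1}=\frac{1}{2N+1}\tilde u_k+\bigl(1-\frac{1}{2N+1}\bigr)\tilde v_k$. Substituting $\tilde u_k=\tilde v_k+x_k$ collapses the homogeneous part and leaves the pure telescoping recursion $\tilde v_{k+1}=\tilde v_k+\frac{1}{2N+1}x_k$, whence $\tilde v_k=\frac{1}{2N+1}\sum_{j=1}^{k-1}x_j$. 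Therefore $\tilde u_k=x_k+\frac{1}{2N+1}\sum_{j=1}^{k-1}x_j$, and $u_N=N\tilde u_N=N x_N+\frac{N}{2N+1}\sum_{j=1}^{N-1}x_j$. The one point needing care is that the parametrisation $\tilde v_k=v_k/(N-k)$ degenerates at $k=N$; I would confirm the last line by applying $\mathcal A_{N-1}$ directly to $(u_{N-1},v_{N-1})$, which reproduces the same expression.

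It then remains to evaluate the two pieces. For the sum I would use the telescoping identity $\frac{\Gamma(j+\alpha)}{\Gamma(j+1)}=\frac{1}{\alpha}\Bigl(\frac{\Gamma(j+1+\alpha)}{\Gamma(j+1)}-\frac{\Gamma(j+\alpha)}{\Gamma(j)}\Bigr)$, giving $\sum_{j=1}^{N-1}x_j=\frac{1}{\alpha\Gamma(1+\alpha)}\frac{\Gamma(N+\alpha)}{\Gamma(N)}-\frac1\alpha$. Because $\frac{N}{2N+1}=\alpha$ \emph{exactly}, the prefactor cancels the $\frac1\alpha$, and adding $N x_N=\frac{1}{\Gamma(1+\alpha)}\frac{\Gamma(N+\alpha)}{\Gamma(N)}$ yields the clean closed form
\[u_N=\frac{2}{\Gamma(1+\alpha)}\,\frac{\Gamma(N+\alpha)}{\Gamma(N)}-1.\]
Crucially, the two contributions $N x_N$ and $\frac{N}{2N+1}\sum_j x_j$ are of the same order and each account for exactly half of the leading term.

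Finally I would pass to the limit: as $N\to\infty$ we have $\alpha\to\frac12$, so $\Gamma(1+\alpha)\to\Gamma(\tfrac32)=\frac{\sqrt\pi}{2}$, while $\frac{\Gamma(N+\alpha)}{\Gamma(N)}\sim N^{\alpha}=\sqrt N\,N^{-1/(2(2N+1))}\sim\sqrt N$. Hence $u_N\sim\frac{2}{\sqrt\pi/2}\sqrt N=\frac{4}{\sqrt\pi}\sqrt N$, which is Lemma \ref{lem-u} and therefore Theorem \ref{thm}. I expect the main obstacle to be twofold: first, recognising that the accumulated ``history'' term $\frac{N}{2N+1}\sum_{j}x_j$ is not negligible but contributes at the same $\sqrt N$ scale as $N x_N$, so that a bound on $x_N$ alone would give the wrong constant $\frac{2}{\sqrt\pi}$; and second, justifying the Gamma asymptotics when the parameter $\alpha=\alpha_N$ moves with $N$, for which I would compare $\frac{\Gamma(N+\alpha)}{\Gamma(N)}$ with $\frac{\Gamma(N+1/2)}{\Gamma(N)}$ and control the ratio by Stirling, using $N^{-1/(2(2N+1))}\to1$.
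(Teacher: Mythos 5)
Your argument is correct, and it shares the paper's skeleton: both proofs start from $x_k=\tilde u_k-\tilde v_k$ with the product formula coming from \eqref{rec-x} and $x_1=1$, both reduce the second component to the telescoping recursion $\tilde v_{k+1}=\tilde v_k+\frac{x_k}{2N+1}$ so that $\tilde v_N=\frac{1}{2N+1}\sum_{l=1}^{N-1}x_l$, and both rest on the observation that $Nx_N$ and $N\tilde v_N$ each contribute half of the leading term (a bound on $x_N$ alone would indeed give the wrong constant $2/\sqrt{\pi}$). Where you genuinely diverge is in the evaluation. The paper splits the product for $x_k$ into $\prod_l\frac{2l+1}{2l+2}$, handled by Stirling, times a correction $\prod_l\bigl[1-\frac{1}{(2l+1)(2N+1)}\bigr]$ bounded below by $1-O(\log k/N)$, and then sandwiches $\sum_l x_l$ between integrals of $2/\sqrt{\pi x}$. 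You instead observe that $\frac{2Nl+N+l}{(2N+1)(l+1)}=\frac{l+\alpha}{l+1}$ with $\alpha=\frac{N}{2N+1}$, which gives the exact Gamma ratio $x_k=\frac{\Gamma(k+\alpha)}{\Gamma(1+\alpha)\Gamma(k+1)}$, and your telescoping identity for $\sum_j\Gamma(j+\alpha)/\Gamma(j+1)$ produces the exact closed form $u_N=\frac{2}{\Gamma(1+\alpha)}\frac{\Gamma(N+\alpha)}{\Gamma(N)}-1$ (I checked it against the recursion for small $N$: for $N=2$ both give $u_2=1.8$). This buys a cleaner and in fact stronger statement than the two-sided bounds of the paper, at the price of having to justify $\Gamma(N+\alpha_N)/\Gamma(N)\sim N^{\alpha_N}\sim\sqrt N$ uniformly as $\alpha_N\to 1/2$, which you correctly flag and which is a routine Stirling estimate. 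Two details you should write out in full: the degeneracy of $\tilde v_N=v_N/(N-N)$, which is resolved exactly as you propose, by treating $\tilde v_N$ as defined by the recursion or equivalently by applying $\mathcal{A}_{N-1}$ directly to $(u_{N-1},v_{N-1})$; and the fact that you rederive the second-row recursion $\tilde v_{k+1}=\frac{1}{2N+1}\tilde u_k+(1-\frac{1}{2N+1})\tilde v_k$ from $\mathcal{A}_k$ rather than reading it off the displayed matrix \eqref{eq:A} --- this is the right move, since the first column of \eqref{eq:A} as printed is not consistent with $\mathcal{A}_k$ (nor with \eqref{rec-x}), whereas your recursion agrees with the one the paper itself uses inside the proof of the lemma.
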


\begin{proof} Let us define $x_{k}=\tilde{u}_{k}-\tilde{v}_{k}$ for all $k\in\{1,...,N\}$. 
Since $u_1=1$ and $v_1=0$, from Equation \eqref{rec-x} we have that for any $k\geq 0$,
$$x_{k+1}=\prod_{l=1}^k\frac{2Nl+N+l}{(2N+1)(l+1)}=\prod_{l=1}^k\left(1-\frac{1}{2N+1}\right)\left[1-\frac{1}{2(l+1)}+\frac{1}{2N}\left(1-\frac{1}{l+1}\right)\right].$$
Therefore 
%\begin{align*} x_{k+1}&=\left(1-\frac{1}{2N+1}\right)^k\prod_{l=1}^k\frac{2l+1}{2l+2}\prod_{l=1}^k\left[1+\frac{1}{2N}\frac{2l}{2l+1}\right]\\&=\exp\left(\sum_{l=1}^k\ln\left[1-\frac{1}{(2l+1)(2N+1)}\right]\right)\prod_{l=1}^k\frac{2l+1}{2l+2}\\&=\exp\left(\sum_{l=1}^k\ln\left[1-\frac{1}{(2l+1)(2N+1)}\right]\right)\frac{2(2k+2)!}{4^{k+1}((k+1)!)^2}\end{align*}
\begin{align*} x_{k+1}&=\left(1-\frac{1}{2N+1}\right)^k\prod_{l=1}^k\frac{2l+1}{2l+2}\prod_{l=1}^k\left[1+\frac{1}{2N}\frac{2l}{2l+1}\right]\\&=\prod_{l=1}^k\left[1-\frac{1}{(2l+1)(2N+1)}\right]\prod_{l=1}^k\frac{2l+1}{2l+2}\\&=\prod_{l=1}^k \left[1-\frac{1}{(2l+1)(2N+1)}\right]\frac{2(2k+2)!}{4^{k+1}((k+1)!)^2}\end{align*}
By Stirling formula:
$$ \frac{2(2k+2)!}{4^{k+1}((k+1)!)^2}= \frac{2\sqrt{2\pi (2k+2)}\left(\frac{2k+2}{e}\right)^{2k+2}}{4^{k+1}\times 2\pi (k+1)\left(\frac{k+1}{e}\right)^{2k+2}}\left(1+O\left(\frac{1}{k}\right)\right)=\frac{2}{\sqrt{\pi (k+1)}}\left(1+O\left(\frac{1}{k}\right)\right).$$
Using that
% $n!=\sqrt{2\pi n} \left(\frac{n}{e}\right)^n\left[1+\frac{1}{12n}+O\left(\frac{1}{n^2}\right)\right]$
 $\prod_{l=1}^k\left[1-\frac{1}{(2l+1)(2N+1)}\right]\geq1-\sum_{l=1}^k\frac{1}{(2l+1)(2N+1)}>1-\frac{\log(2k+2)}{2(2N+1)}$
we have that there exists a positive constant $C$ such that for any $k\geq 1$
%there exists $k_0\geq 1$ such that for $k\geq k_0$
$$\frac{2}{\sqrt{\pi k}}\left(1-\frac{C(\log(k)+1)}{N}\right)\leq x_{k}\leq \frac{2}{\sqrt{\pi k}}\left(1+\frac{C}{k}\right).$$ 
%$$\frac{2}{\sqrt{\pi k}}\left(1-\frac{C}{k}\right)\leq x_{k}\leq \frac{2}{\sqrt{\pi k}}\left(1+\frac{C}{k}\right).$$ 

Besides, $\tilde{u}_N=x_N+\tilde{v}_N$ and from Equations \eqref{rec-uv} and \eqref{eq:A}, we have

$$\tilde{v}_{k+1}=\frac{\tilde{u}_k}{2N+1}+\frac{2N}{2N+1}\tilde{v}_k=\frac{x_k}{2N+1}+\tilde{v}_k$$
which gives that
$$\tilde{v}_N=\frac{1}{2N+1}\sum_{l=1}^{N-1}x_{l}
.$$

Therefore:
%taking $k_N=[N^{1/4}]$ for instance,
%$$\tilde{v}_N=\frac{1}{2N+1}\left[\sum_{l=1}^{k_0}x_{l}+\sum_{l=k_0+1}^{N-1}x_{l}\right]$$ gives 
$$\frac{1}{2N+1}\sum_{k=1}^{N-1}\frac{2}{\sqrt{\pi k}}\left(1-\frac{C(\log(k)+1)}{N}\right)\leq \tilde{v}_N\leq \frac{1}{2N+1}\sum_{k=1}^{N-1}\frac{2}{\sqrt{\pi k}}\left(1+\frac{C}{k}\right).$$
 % $$\frac{1}{2N+1}\sum_{k=1}^{N-1}\frac{2}{\sqrt{\pi k}}\left(1-\frac{C}{k}\right)\leq \tilde{v}_N\leq \frac{1}{2N+1}\sum_{k=1}^{N-1}\frac{2}{\sqrt{\pi k}}\left(1+\frac{C}{k}\right).$$
Now $\int_1^N\frac{2}{\sqrt{\pi x}}dx<\sum_{k=1}^{N-1}\frac{2}{\sqrt{\pi k}}<\int_0^{N-1}\frac{2}{\sqrt{\pi x}}dx$. Therefore $\sum_{k=1}^{N-1}\frac{2}{\sqrt{\pi k}}\underset{N\rightarrow +\infty}{\sim}\frac{4\sqrt{N}}{\sqrt{\pi}} $.\\
%Now $\frac{1}{2N+1}\sum_{k=k_N+1}^{N-1}\frac{2}{\sqrt{\pi(k+1)}}=\frac{1}{\sqrt{N}}\frac{1}{2N+1}\sum_{k=k_N+1}^{N-1}\frac{2}{\sqrt{\pi\frac{(k+1)}{N}}}\sim\frac{2}{\sqrt{\pi N}}$ by a standard Riemann sum argument. 
Moreover, $\sum_{k=1}^{N-1}\frac{\log(k)+1}{N\sqrt{\pi k}}$ is bounded (and converges to 0 if $N\rightarrow \infty$).\\
 Hence, $\tilde{v}_N\underset{N\rightarrow +\infty}{\sim} \frac{2}{\sqrt{\pi N}}$. Since $x_N \underset{N\rightarrow +\infty}{\sim} \frac{2}{\sqrt{\pi N}}$, $\tilde{u_N}\underset{N\rightarrow +\infty}{\sim} \frac{4}{\sqrt{\pi N}}.$

\end{proof}

\bibliographystyle{abbrv}
\bibliography{biblio}

\end{document}